\theoremstyle{plain}
\newtheorem{theorem}{Theorem}
\newtheorem{lemma}{Lemma}
\theoremstyle{definition}
\newtheorem{definition}{Definition}
\newtheorem{notation}{Notation}
\newtheorem{remark}{Remark}
\newtheorem{question}{Question}
\date{}
\begin{document}

\title[Weak defectivity]
{Symmetric tensor rank with a tangent vector: a generic uniqueness theorem}
\author{Edoardo Ballico, Alessandra Bernardi}
\address{Dept. of Mathematics\\
  University of Trento\\
38123 Povo (TN), Italy}
\address{GALAAD, INRIA M\'editerran\'ee,  BP 93, 06902 Sophia 
Antipolis, France.}
\email{ballico@science.unitn.it, alessandra.bernardi@inria.fr}
\thanks{The authors were partially supported by CIRM of FBK Trento 
(Italy), Project Galaad of INRIA Sophia Antipolis M\'editerran\'ee 
(France), Institut Mittag-Leffler (Sweden), Marie Curie: Promoting science (FP7-PEOPLE-2009-IEF), MIUR and GNSAGA of 
INdAM (Italy).}
\subjclass{14N05; 14M17}
\keywords{Veronese variety; tangential variety; join; weak defectivity}

\begin{abstract}
Let $X_{m,d}\subset \mathbb {P}^N$, $N:= \binom{m+d}{m}-1$, be the order  $d$ Veronese
embedding of $\mathbb {P}^m$. Let $\tau (X_{m,d})\subset \mathbb {P}^N$, be the tangent developable of $X_{m,d}$. For
each integer $t \ge 2$ let $\tau (X_{m,d},t)\subseteq \mathbb {P}^N$, be the join of $\tau (X_{m,d})$ and $t-2$ copies
of $X_{m,d}$. Here we prove that if $m\ge 2$, $d\ge 7$ and $t \le 1 + \lfloor \binom{m+d-2}{m}/(m+1)\rfloor$, then for a general
$P\in \tau (X_{m,d},t)$ there are uniquely determined $P_1,\dots ,P_{t-2}\in X_{m,d}$ and a unique tangent vector $\nu$ of $X_{m,d}$ such
that $P$ is in the linear span of $\nu \cup \{P_1,\dots ,P_{t-2}\}$, i.e. a degree $d$ linear form $f$ (a symmetric tensor $T$ of  order $d$) associated to $P$ may be written as
$$f = L_{t-1}^{d-1}L_t + \sum _{i=1}^{t-2} L_i^d, \; \; \; \; (T = v_{t-1}^{\otimes (d-1)}v_t + \sum _{i=1}^{t-2} v_i^{\otimes d})$$
with $L_i$ linear forms on $\mathbb {P}^m$ ($v_i$ vectors over a vector field  of dimension $m+1$ respectively), $1 \le i \le t$, that are uniquely determined (up to a constant).
\end{abstract}

\maketitle

\section{Introduction} 
In this paper we want to address the question of the uniqueness of a particular decomposition for certain given homogeneous polynomials. 
 An analogous question can be rephrased in terms of uniqueness of a particular tensor decomposition of certain given symmetric tensors. In fact, given a homogeneous polynomial $f$ of degree $d$ in $m+1$ variables defined over an algebraically closed field $\mathbb{K}$, there is an obvious way to associate  a symmetric tensor $T \in S^d(V_{\mathbb{K}})$, with $\dim(V_{\mathbb{K}})=m+1$, to  the form $f$.
We will always work over an algebraically closed field $\mathbb{K}$ such that $\mbox{char}(\mathbb{K})=0$.
Fix integers $m\ge 2$ and $d\ge 3$.
Let $j_{m,d}: \mathbb {P}^m \hookrightarrow \mathbb {P}^N$, $N:= \binom{m+d}{m}-1$, be the order $d$ Veronese embedding of $\mathbb {P}^m$ and set $X_{m,d}:= j_{m,d}(\mathbb {P}^m)$ (we often write $X$ instead of $X_{m,d}$). Let $\mathbb{K}[x_0, \ldots , x_m]_d$ be the polynomial ring of homogeneous degree $d$ polynomials in $m+1$ variables over $\mathbb{K}$ and let $V_{\mathbb{K}}^*$ be the dual space of $V_{\mathbb{K}}$. Since obviously $\mathbb{P}^m\simeq \mathbb{P}(\mathbb{K}[x_0, \ldots , x_m]_1)\simeq \mathbb{P}(V_{\mathbb{K}}^*)$, an element of the Veronese variety $X_{m,d}$ can be interpreted either as the projective class of a $d$-th power of a linear form $L\in \mathbb{K}[x_0, \ldots , x_m]_1$ or as the projective class of a symmetric tensor $T\in S^d(V_{\mathbb{K}}^*)\subset (V_{\mathbb{K}}^{*})^{\otimes d}$ for which there exists $v\in V_{\mathbb{K}}^*$ s.t. $T=v^{\otimes d}$.
\\
For each integer $t$ such that $1\le t \le N$ let $\sigma _t(X)$ denote the closure in $\mathbb {P}^N$ of the union of all $(t-1)$-dimensional linear subspaces spanned by $t$ points of $X$ (the $t$-secant variety of $X$). From this definition one can understand that the generic element of $\sigma_t(X_{m,d})$ can be interpreted either as $[f]=[L_1^d + \cdots + L_t^d]\in \mathbb{P(K}[x_0, \ldots , x_m]_d)$ with $L_1, \ldots , L_t \in \mathbb{K}[x_0 , \ldots , x_m]_1$ or as $[T]=[v_1^{\otimes d}+ \cdots + v_t^{\otimes d}]\subset \mathbb{P}(S^d(V_{\mathbb{K}}^*))$ with $v_1 , \ldots , v_t\in V^*_{\mathbb{K}}$. For a given form $f$ (or a symmetric tensor $T$), the minimum integer $t$ for which there exists such a decomposition is called the symmetric rank of $f$ (or of $T$). Finding those  $v_i$'s, $i=1, \ldots , t$ such that $T=v_1^{\otimes d}+ \cdots + v_t^{\otimes d}$, with $t$ the symmetric rank of $T$,  is known as the Tensor Decomposition problem  and it is a generalization of the Singular Value Decomposition problem for symmetric matrices (i.e. if $T\in S^2(V^{*}_{\mathbb{K}})$).  The existence and the possible uniqueness of the decompositions of a form $f$ as $L_1^{d}+ \cdots + L_t^d$ with $t$ minimal is studied in certain cases in \cite{cc}, \cite{cr}, \cite{m1}, \cite{m2}.
\\
Let $\tau (X)\subseteq \mathbb{P}^N$ be the tangent developable of
$X$, i.e. the closure in $\mathbb{P}^N$ of the union of all embedded tangent spaces $T_PX$, $P\in X$.
Obviously
$\tau (X) \subseteq \sigma _2(X)$ and $\tau (X)$ is integral. 
Since $d\ge 3$, the variety $\tau (X)$ is a divisor of $\sigma _2(X)$ (\cite{cgg}, Proposition 3.2).  An element in $\tau(X_{m,d})$ can be described  both as  $[f]\in \mathbb{P}(\mathbb{K}[x_0, \ldots , x_m]_d)$ for which there exists two linear forms $L_1, L_2\in \mathbb{K}[x_0, \ldots , x_m]_1$ such that $f=L_1^{d-1}L_2$, and as $[T]\in \mathbb{P}(\in S^d(V_{\mathbb{K}}^*))$ for which there exists two vectors $v_1,v_2\in V^*_{\mathbb{K}}$ such that $T=v_1^{\otimes d-1}v_2$ (\cite{cgg}, \cite{bgi}).
\\
Fix integral positive-dimensional subvarieties $A_1,\dots ,A_s \subset \mathbb {P}^N$, $s\ge 2$.
The join $[A_1,A_2]$ is the closure in $\mathbb {P}^N$ of the union of all lines spanned by a point of $A_1$ and a different point of $A_2$. If $s\ge 3$ define inductively the join $[A_1,\dots ,A_s]$ by the formula $[A_1,\dots ,A_s]:= [[A_1,\dots ,A_{s-1}],A_s]$. 
The join $[A_1,\dots ,A_s]$ is an integral variety and $\dim ([A_1,\dots ,A_s])\le \min \{N,s-1 +\sum _{i=1}^{s} \dim (A_i)\}$. 
The integer $\min \{N,s-1 +\sum _{i=1}^{s} \dim
(A_i)\}$ is called the {\it expected dimension} of the join $[A_1,\dots ,A_s]$. Obviously $[A_1,\dots ,A_s]
=[A_{\sigma (1)},\dots ,A_{\sigma (s)}]$ for any permutation $\sigma : \{1,\dots ,s\} \to \{1,\dots ,s\}$. The secant variety $\sigma _t(X)$, $t\ge 2$, is the join of $t$ copies of $X$.
For each integer
$t\ge 3$ let
$\tau (X,t)
\subseteq
\mathbb{P}^N$ be the join of
$\tau (X)$ and
$t-2$ copies of $X$.
We recall that $\min \{N,t(m+1)-2\}$ is the expected dimension of $\tau (X,t)$, while $\min \{N,t(m+1)-1\}$ is the expected dimension of $\sigma _t(X)$. In the range of triples $(m,d, t)$ we will meet in this paper
both $\tau (X,t)$ and $\sigma _t(X)$ have the expected dimensions and hence $\tau (X,t)$ is a divisor of $\sigma _t(X)$. An element in $\tau(X_{m,d},t)$ can be described both as $[f]\in \mathbb{P}(\mathbb{K}[x_0, \ldots , x_m]_d)$ for which there exist linear forms $L_1 \ldots , L_t \in \mathbb{K}[x_0, \ldots , x_m]_1$ such that $f=L_{t-1}^{d-1}L_t+ \sum_{i=1}^{t-2}L_i^d$, and as $[T]\in \mathbb{P}(S^d(V^*_{\mathbb{K}}))$ for which there exist $v_1, \ldots , v_t\in V^*_{K}$ such that $T=v_{t-1}^{\otimes (d-1)}v_t+ \sum_{i=1}^{t-2}v_i^{\otimes d}$.

After \cite{bb}, it is natural to ask the following question.

\begin{question}\label{w3}
Assume $d\ge 3$ and $\tau (X,t) \ne \mathbb{P}^N$. Is a general point of $\tau (X,t)$
in the linear span of a unique set $\{P_0,P_1,\dots ,P_{t-2}\}$ with $(P_0,P_1,\dots ,P_{t-2})\in
\tau (X)\times X^{t-2}$?
\end{question}

For non weakly $(t-1)$-degenerate subvarieties of $\mathbb{P}^N$ the corresponding question is true by \cite{cr}, Proposition 1.5. Here we answer it for a large set of triples of integers $(m ,d, t)$ and prove the following result.

\begin{theorem}\label{i1}
Fix integers $m \ge 2$ and $d\ge 6$. If $m\le 4$, then assume $d\ge 7$. Set $\beta := \lfloor
\binom{m+d-2}{m}/(m+1)\rfloor$.  Assume $3\leq t \le \beta +1$. Let $P$ be a general point of $\tau (X,t)$. Then there
are uniquely determined points $P_1,\dots ,P_{t-2}\in X$ and $Q\in \tau (X)$ such that
$P\in \langle \{P_1,\dots ,P_{t-2}, Q\}\rangle$, i.e. (since $d>2$) there are uniquely determined
points $P_1,\dots ,P_{t-2}\in X$ and a unique tangent vector $\nu$ of $X$ such that $P\in \langle \{P_1,\dots ,P_{t-2}\}\cup  \nu \rangle$.
\end{theorem}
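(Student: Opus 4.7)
The plan is to reduce the asserted generic uniqueness to the statement that $\tau(X,t)$ is not weakly $(t-1)$-degenerate (in the sense of \cite{cc}), for then \cite[Prop.~1.5]{cr} (as quoted in the introduction) applies directly and delivers uniqueness. The main technical work is thus to prove this non-degeneracy, and I would do so by an apolarity translation to a linear system on $\mathbb{P}^m$ combined with a Horace-type specialization argument.

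By Terracini's lemma, the affine tangent space to $\tau(X,t)$ at a general point $P \in \langle P_1, \ldots, P_{t-2}, Q\rangle$ is the sum $\sum_{i=1}^{t-2} T_{P_i}X + T_Q\tau(X)$. Writing $p_i = j_{m,d}^{-1}(P_i)$ for $i \leq t-2$ and $(p_{t-1},v) \in T\mathbb{P}^m$ for the datum giving $Q$, the apolarity pairing identifies hyperplanes containing this tangent space with the degree-$d$ part of the ideal of the zero-dimensional scheme
\[
Z \;=\; 2p_1 + \cdots + 2p_{t-2} + W(p_{t-1},v),
\]
where $W(p_{t-1},v)$ is the length-$(2m+1)$ scheme supported at $p_{t-1}$ whose apolar degree-$d$ forms cut out exactly $T_Q\tau(X)$. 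Non weak $(t-1)$-degeneracy of $\tau(X,t)$ amounts to showing that for a generic choice of the $p_i$'s and of $(p_{t-1},v)$, a general element of $|I_Z(d)|$ has only the prescribed contact with $X \cup \tau(X)$, i.e.\ its contact locus is zero-dimensional and reduces precisely to $\{P_1,\ldots,P_{t-2}, Q\}$.

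To prove this I would fix a hyperplane $M \cong \mathbb{P}^{m-1} \subset \mathbb{P}^m$ and specialize a suitable number of support points of $Z$ (including $p_{t-1}$, with $v$ chosen tangent to $M$) onto $M$. The Horace exact sequence
\[
0 \longrightarrow I_{\mathrm{Res}_M Z}(d-1) \longrightarrow I_Z(d) \longrightarrow I_{Z \cap M,\,M}(d) \longrightarrow 0
\]
splits the problem into a lower-degree instance on $\mathbb{P}^m$ and a lower-dimensional instance on $M$, each of the same qualitative shape (a union of several double points plus one tangential scheme). This sets up a double induction on $m$ and $d$: the hypothesis $t-1 \leq \lfloor\binom{m+d-2}{m}/(m+1)\rfloor$ is the Castelnuovo-type numerical bound that makes the degeneration balance dimensionally, and the assumption $d \geq 6$ (with $d \geq 7$ for $m \leq 4$) provides the room needed for the base of the induction and for the initial systems on the hyperplane to be non-special.

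The main obstacle will be handling the non-standard scheme $W(p_{t-1},v)$ under the specialization: unlike the ordinary double points $2p_i$, it carries a distinguished tangent direction, and the structures of $W \cap M$ and $\mathrm{Res}_M W$ depend on whether $v$ lies in $T_{p_{t-1}}M$ or not. By choosing $v \in T_{p_{t-1}}M$ one can arrange that $W \cap M$ is an analogous tangential-type scheme on $M$ (feeding the lower-dimensional branch of the recursion) while $\mathrm{Res}_M W$ contributes only a small residual at $p_{t-1}$ (feeding the lower-degree branch), so that both halves of the Horace splitting remain within the intended class of schemes. Once the induction closes, the zero-dimensionality of the contact locus is established and \cite[Prop.~1.5]{cr} concludes the theorem.
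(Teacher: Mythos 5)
There is a genuine gap at the decisive step of your reduction. You propose to prove that for a general hyperplane $H$ containing $T_P\tau(X,t)$ the contact locus with $X\cup\tau(X)$ is zero-dimensional, and then to conclude by \cite[Proposition 1.5]{cr}. That statement is false, and its failure is precisely the obstacle this paper is built around: a general tangent space of $\tau(X)$ is tangent to $\tau(X)$ along the entire line $\langle\nu\rangle$ spanned by the tangent vector, so by Terracini every hyperplane containing $T_P\tau(X,t)$ is tangent to $\tau(X,t)$ at least along that line. Thus $\tau(X,t)$ is weakly $0$-defective, the contact locus always contains a positive-dimensional component through $Q$, and the hypothesis of \cite[Proposition 1.5]{cr} can never be verified for this variety. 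Your Horace set-up, including the identification of the length-$(2m+1)$ scheme $W(p_{t-1},v)$ (which is exactly the $(2,3)$-point of \cite{cgg}), is a reasonable way to obtain the needed cohomological vanishings --- the paper imports the analogous statements from \cite{bb} as Lemmas \ref{h1} and \ref{h2} --- but those vanishings are not, by themselves, the end of the proof.

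What is missing is a substitute for the two-stage endgame the paper uses. First, one shows (via Lemma \ref{a1} applied on suitable blow-ups, leading to Lemma \ref{e2} and the ``drip defectivity'' statement of Theorem \ref{w2}) that the contact locus of a general such $H$ with $X$ itself is the finite set $\{P_1,\dots ,P_{t-2},\nu_{red}\}$, with an \emph{ordinary node} at each $P_i$ and a strictly worse singularity at $\nu_{red}$; this asymmetry is what lets one recover the unordered set $\{P_1,\dots ,P_{t-2}\}$ and the point $O=\nu_{red}$ from any competing decomposition $(P'_1,\dots ,P'_{t-2},Q')$. Second, since $H$ is tangent to $\tau(X)\setminus X$ along all of $\langle\nu\rangle$, the contact-locus information only forces $Q'\in\langle\nu\rangle$, not $Q'=Q$; the paper closes this last degree of freedom by the linear-algebra observation that if $Q'\ne Q$ then $\langle \{P_1,\dots ,P_{t-2},Q\}\rangle\cap\langle \{P_1,\dots ,P_{t-2},Q'\}\rangle=\langle \{P_1,\dots ,P_{t-2}\}\rangle$, which would put the general point $P$ into $\tau(X,t-1)$, a contradiction. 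Neither of these steps is present in, or implied by, your proposal as written, so the argument does not close even granting the inductive cohomology computation.
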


In terms of homogeneous polynomials Theorem \ref{i1} may be rephrased
in the following way.

\begin{theorem}\label{i2}
Fix integers $m \ge 2$ and $d\ge 6$. If $m\le 4$, then assume $d\ge 7$. Set $\beta := \lfloor
\binom{m+d-2}{m}/(m+1)\rfloor$.  Assume $3\leq t \le \beta +1$. Let $P$ be a general point of $\tau (X,t)$ and let $f$ be a homogeneous degree
$d$ form in $\mathbb {K}[x_0,\dots ,x_m]$ associated to $P$. Then $f$ may be written in a unique way
$$f = L_{t-1}^{d-1}L_t + \sum _{i=1}^{t-2} L_i^d$$
with $L_i\in \mathbb {K}[x_0,\dots ,x_m]_1$, $1 \le i \le t$.
\end{theorem}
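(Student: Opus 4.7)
Theorem \ref{i2} is the polynomial reformulation of Theorem \ref{i1}. Via the Veronese identifications recalled in the introduction, a point of $X_{m,d}$ is $[L^d]$ for some $L\in\mathbb{K}[x_0,\dots,x_m]_1$, a point of $\tau(X_{m,d})\setminus X_{m,d}$ is $[L^{d-1}M]$ for some independent $L,M$, and a linear-span condition $P\in\langle P_1,\dots,P_{t-2},Q\rangle$ becomes an additive expression $f=\lambda L_{t-1}^{d-1}L_t+\sum_i\mu_i L_i^d$. The scalars $\lambda,\mu_i$ are absorbed by rescaling the linear forms, giving the stated normalization; the unique determination of the points $P_i$ and of the tangent vector $\nu$ granted by Theorem~\ref{i1} translates into the unique determination of the $L_i$, up to multiplication by a $d$-th root of unity. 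Hence it suffices to establish Theorem~\ref{i1}, which we now outline.

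The strategy is to apply the Chiantini--Ciliberto framework of weakly defective varieties (\cite{cc}, \cite{cr}) to $\tau(X,t)$. For a general $P\in\tau(X,t)$, write $P\in\langle Q_1,\dots,Q_{t-2},R\rangle$ with $Q_i\in X$ and $R\in\tau(X)$ general. By Terracini's lemma for the join,
$$T_P\tau(X,t)=\langle T_{Q_1}X,\dots,T_{Q_{t-2}}X, T_R\tau(X)\rangle,$$
and, via the Veronese identification, this linear space is the image of the degree-$d$ forms on $\mathbb{P}^m$ vanishing on a zero-dimensional scheme $Z\subset\mathbb{P}^m$: the union of $t-2$ double points at the preimages of the $Q_i$ and a length-$(2m+1)$ osculating scheme at the base point of $R$. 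The first step is to verify that $\tau(X,t)$ has the expected dimension, equivalently that $Z$ imposes independent conditions on $H^0(\mathcal{O}_{\mathbb{P}^m}(d))$; this is precisely what the hypothesis $t\le\beta+1$ encodes, and the exceptional bound $d\ge 7$ for $m\le 4$ covers the boundary cases.

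The second and central step is the uniqueness assertion itself. One must prove that the contact locus with $X\cup\tau(X)$ of a general hyperplane $H\supset T_P\tau(X,t)$ is exactly $\{P_1,\dots,P_{t-1}\}$, with the tangent data at $P_{t-1}$ recovering $\nu$. Any competing decomposition $(P_1',\dots,P_{t-2}',\nu')$ would produce a second scheme $Z'$ on which every form of $H^0(\mathcal{I}_Z(d))$ also vanishes; comparing $h^0(\mathcal{I}_{Z\cup Z'}(d))$ with $h^0(\mathcal{I}_Z(d))$ and using the genericity of $P$ then forces $Z=Z'$. The main obstacle is precisely this cohomological comparison: one must rule out every possible configuration in which $Z$ and $Z'$ share support, including the infinitely-near case $P_{t-1}=P_{t-1}'$ with distinct tangent directions. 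This is handled by an inductive Horace-type degeneration on $m$ combined with a projection-reduction to curves in $\mathbb{P}^m$, and the boundary of these cohomological estimates is what dictates the hypothesis $d\ge 7$ in the low-dimensional range.
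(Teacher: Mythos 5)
Your first paragraph --- the translation between Theorem \ref{i2} and Theorem \ref{i1} via the Veronese dictionary --- is exactly what the paper does: Theorem \ref{i2} is stated there as a rephrasing of Theorem \ref{i1} with no separate argument, so that reduction is correct and identical in approach. Since you are writing blind, however, the substance of your proposal is the outline of Theorem \ref{i1}, and there the plan has a genuine gap. You assert that the contact locus of a general hyperplane $H\supset T_P\tau (X,t)$ with $X\cup \tau (X)$ is exactly the finite set $\{P_1,\dots ,P_{t-1}\}$. This is false, and its failure is the central difficulty of the paper: $H$ is necessarily tangent to $\tau (X)$ along the whole line $\langle \nu \rangle$ (this is the content of Theorem \ref{i3}), so $\tau (X,t)$ \emph{is} weakly $0$-defective and the Chiantini--Ciliberto criterion cannot be applied off the shelf. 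The paper's workaround is to introduce an ad hoc notion (``drip defectivity''), to encode $T_Q\tau (X)$ by a $(2,3)$-point $Z(O,L)$ rather than by an arbitrary length-$(2m+1)$ osculating scheme, and to prove --- via the blow-up Lemma \ref{a1} combined with the $h^1$-vanishings for unions of double points with one quadruple or two triple points imported from \cite{bb} (Lemmas \ref{h1} and \ref{h2}) --- that the contact locus is zero-dimensional away from that line, with ordinary nodes at the $P_i$ and a strictly worse, hence distinguishable, singularity at $O$. Your plan contains no substitute for this step.

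A second, related gap is the uniqueness mechanism. Comparing $h^0(\mathcal {I}_{Z\cup Z'}(d))$ with $h^0(\mathcal {I}_Z(d))$ cannot ``force $Z=Z'$'' by genericity, because the competing scheme $Z'$ attached to a second decomposition of the \emph{same} point $P$ is not general relative to $Z$, so its cohomology is not controlled by a parameter count. The paper argues instead by matching singularities of the divisor $H\cap X$: a second decomposition $(P'_1,\dots ,P'_{t-2},Q')$ is again general in $X^{t-2}\times \tau (X)$ (because $P$ is general and $\tau (X,t)$ has the expected dimension), so $H\cap X$ must have ordinary nodes at the $P'_i$; since $O$ is not an ordinary node this pins down $\{P'_i\}=\{P_i\}$ and $O'=O$, and then $Q'=Q$ because $H$ is tangent to $\tau (X)_{reg}$ only along $\langle \{Q,O\}\rangle$ and $P\notin \tau (X,t-1)$. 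Finally, the ``Horace-type degeneration'' you invoke for the hard cohomological estimates is neither carried out nor what this paper does (those vanishings are quoted from \cite{bb}). In short: right framework, but the two ideas that make it work --- handling the one-dimensional tangency locus along $\langle \nu \rangle$ and the singularity-matching uniqueness argument --- are missing.
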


In the statement of Theorem \ref{i2} the form $f$ is uniquely determined only up to a non-zero scalar, and (as usual in this topic) ``~uniqueness~'' may allow not only a permutation of the forms $L_1,\dots ,L_{t-2}$, but
also a scalar multiplication of each $L_i$.

In terms of symmetric tensors Theorem \ref{i1} may be rephrased
in the following way.

\begin{theorem}\label{iadd}
Fix integers $m \ge 2$ and $d\ge 6$. If $m\le 4$, then assume $d\ge 7$. Set $\beta := \lfloor
\binom{m+d-2}{m}/(m+1)\rfloor$.  Assume $3\leq t \le \beta +1$. Let $P$ be a general point of $\tau (X,t)$ and let $T\in S^d(V_{\mathbb{K}}^*)$ be a symmetric tensor associated to $P$. Then $T$ may be written in a unique way
$$T = v_{t-1}^{\otimes (d-1)}v_t + \sum _{i=1}^{t-2} v_i^{\otimes d}$$
with $v_i\in V^*_{\mathbb{K}}$, $1 \le i \le t$.
\end{theorem}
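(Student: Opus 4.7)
The plan is to deduce Theorem \ref{iadd} directly from Theorem \ref{i1} via the canonical identification $\mathbb{P}^N \simeq \mathbb{P}(S^d V_{\mathbb{K}}^*)$. Under this identification, as recalled in the introduction, the Veronese variety $X_{m,d}$ is the locus of classes $[v^{\otimes d}]$ with $v \in V_{\mathbb{K}}^*$, the tangent developable $\tau(X_{m,d})$ is the locus of classes $[v^{\otimes (d-1)} w]$ with $v,w \in V_{\mathbb{K}}^*$, and the linear span in $\mathbb{P}^N$ of a finite collection of points corresponds to the projectivisation of the linear span in $S^d V_{\mathbb{K}}^*$ of any choice of representatives. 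Thus Theorem \ref{iadd} is nothing more than Theorem \ref{i1} read through this dictionary, and no additional geometric content is needed.

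Given a general $P \in \tau(X,t)$ and an associated tensor $T$, I would first apply Theorem \ref{i1} to $P$ to obtain uniquely determined points $P_1,\dots,P_{t-2} \in X$ and a unique tangent vector $\nu$ of $X$ with $P \in \langle \{P_1,\dots,P_{t-2}\} \cup \nu \rangle$. Write $P_i = [v_i^{\otimes d}]$ for $1 \le i \le t-2$ and encode $\nu$ by a representative tensor $v_{t-1}^{\otimes(d-1)} v_t$, using the description of tangent vectors to the Veronese variety. The containment in the linear span then produces scalars $\lambda,\mu_1,\dots,\mu_{t-2}$ such that some representative of $T$ equals $\lambda v_{t-1}^{\otimes(d-1)} v_t + \sum_{i=1}^{t-2} \mu_i v_i^{\otimes d}$; since $\mathbb{K}$ is algebraically closed, these scalars are absorbed into the vectors by taking $d$-th and $(d-1)$-th roots, yielding a representation of $T$ of the form stated.

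For uniqueness, any other decomposition $T = \tilde{v}_{t-1}^{\otimes(d-1)} \tilde{v}_t + \sum_{i=1}^{t-2} \tilde{v}_i^{\otimes d}$ produces points $\tilde{P}_i := [\tilde{v}_i^{\otimes d}] \in X$ and a tangent vector $\tilde{\nu}$ of $X$ represented by $\tilde{v}_{t-1}^{\otimes(d-1)} \tilde{v}_t$, whose projective linear span contains $P$. The uniqueness clause of Theorem \ref{i1} then forces the equality of unordered sets $\{\tilde{P}_1,\dots,\tilde{P}_{t-2}\} = \{P_1,\dots,P_{t-2}\}$ and $\tilde{\nu} = \nu$; translating back through the dictionary, this identifies each $\tilde{v}_i$ with some $v_{\sigma(i)}$ up to a non-zero scalar and pins down the pair $(\tilde{v}_{t-1},\tilde{v}_t)$ up to the usual ambiguity of representing a tangent vector (rescaling $v_{t-1}$ and compensating on $v_t$, and absorbing into $v_t$ any multiple of $v_{t-1}$ that only shifts the representative within $T_{P_{t-1}} X$). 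This is precisely the notion of ``uniqueness'' flagged in the paragraph after Theorem \ref{i2}, so there is no genuine obstacle beyond careful bookkeeping of scalars; the substantive work lies entirely in Theorem \ref{i1}.
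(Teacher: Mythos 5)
Your proposal is correct and matches the paper's treatment: the paper presents Theorem \ref{iadd} as a direct rephrasing of Theorem \ref{i1} through the standard dictionary between points of $X_{m,d}$, $\tau(X_{m,d})$ and tensors of the form $v^{\otimes d}$, $v^{\otimes(d-1)}w$, with uniqueness understood up to non-zero scalars exactly as you describe. Your explicit bookkeeping of the scalars and of the ambiguity in representing the tangent vector is a careful spelling-out of what the paper leaves implicit, and the substantive content indeed all lies in Theorem \ref{i1}.
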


As above, in the statement of Theorem \ref{iadd} the tensor $T$ and the vectors $v_i$'s are uniquely determined only up to  non-zero scalars.

To prove Theorem \ref{i1}, and hence Theorems \ref{i2} and \ref{iadd}, we adapt the notion and the results on weakly defective varieties
described in \cite{cc}. It is easy to adapt \cite{cc} to joins of different varieties instead of secant varieties of a fixed
variety if a general tangent hyperplane is tangent only at one point (\cite{cc2}). However, a general tangent space of $\tau
(X)$ is tangent to
$\tau (X)$ along a line, not just at the point of tangency. Hence a general hyperplane tangent to $\tau (X,t)$, $t\ge 3$,
is tangent to $\tau (X,t)$ at least along a line. We
prove the following result.

\begin{theorem}\label{i3}
Fix integers $m \ge 2$ and $d\ge 6$. If $m\le 4$, then assume $d\ge 7$. Set $\beta := \lfloor
\binom{m+d-2}{m}/(m+1)\rfloor$.  Assume $t \le \beta +1$.  Let $P$ be a general point of $\tau (X,t)$. Let $P_1,\dots ,P_{t-2}\in X$ and $Q\in \tau (X)$ be the points such that
$P\in \langle \{P_1,\dots ,P_{t-2}, Q\}\rangle$. Let $\nu $ be the tangent vector of $X$ such that $Q$ is a point of $\langle \nu \rangle \setminus \nu _{red}$.
Let
$H\subset \mathbb {P}^N$ be a general hyperplane
containing the tangent space $T_P\tau (X,t)$ of $\tau (X,t)$. Then $H$  is tangent to $X$ only
at the points
$P_1,\dots ,P_{t-2},
\nu _{red}$, the scheme $H\cap X$ has an ordinary node at each $P_i$, and $H$ is tangent to $\tau (X)\setminus X$
only along the line
$\langle \nu \rangle$.
\end{theorem}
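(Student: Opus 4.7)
The plan is to translate containment of the tangent space into vanishing of a degree $d$ form on a zero-dimensional subscheme of $\mathbb{P}^m$, and then to run a Chiantini--Ciliberto-style contact-locus analysis (as in \cite{cc}) adapted to the tangent-developable factor. By Terracini's lemma for joins, $T_P\tau(X,t)=\langle T_{P_1}X,\dots,T_{P_{t-2}}X,T_Q\tau(X)\rangle$. Identifying $\mathbb{P}^N\cong\mathbb{P}(H^0(\mathcal{O}_{\mathbb{P}^m}(d))^{*})$, each hyperplane $H\subset\mathbb{P}^N$ corresponds to a nonzero degree $d$ form $F$ on $\mathbb{P}^m$, and the condition $H\supseteq T_P\tau(X,t)$ becomes: $F$ vanishes on $Z:=2p_1+\dots+2p_{t-2}+W_\nu$, where $2p_i$ denotes the first infinitesimal neighbourhood of $p_i$ (length $m+1$) and $W_\nu\subset\mathbb{P}^m$ is a length $2m+1$ scheme supported at $\nu_{red}$ whose local ideal is computed directly from the apolar pairing as the combined conditions ``$F$ has multiplicity $\geq 2$ at $\nu_{red}$'' together with ``all second partials of $F$ at $\nu_{red}$ in the $\nu$-direction vanish''.

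The hypothesis $t\le\beta+1$ together with $d\ge 6$ (resp.\ $d\ge 7$ when $m\le 4$) is exactly what is needed for a Horace/Castelnuovo cohomology argument to yield $h^1(\mathcal{I}_Z(d))=0$ for generic $(p_1,\dots,p_{t-2},\nu)$. Consequently $|\mathcal{I}_Z(d)|$ has the expected positive dimension, and a general hyperplane $H$ through $T_P\tau(X,t)$ corresponds to a general $F\in|\mathcal{I}_Z(d)|$. The conclusion of the theorem then reduces to four open conditions on such an $F$: (a) ordinary node at each $p_i$ for $1\le i\le t-2$; (b) no stronger tangency at $\nu_{red}$ beyond $W_\nu$; (c) no further singular point of $F$ on $\mathbb{P}^m$; and (d) no tangent vector $\nu'\ne\nu$ with $F$ vanishing on $W_{\nu'}$. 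Items (a) and (b) are routine genericity statements: the quadratic part of $F$ at $p_i$ is unconstrained by imposition of $2p_i$ and so is generically nondegenerate, while strengthening the tangency at $\nu_{red}$ is itself a proper closed condition.

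The crux of the proof is (c) and (d). If the linear system $|\mathcal{I}_Z(d)|$ had an unexpected moving base-singularity $p'$ or an unexpected moving base-tangent-vector $\nu'$, then $h^0(\mathcal{I}_{Z+2p'}(d))$ or $h^0(\mathcal{I}_{Z+W_{\nu'}}(d))$ would fail to drop below $h^0(\mathcal{I}_Z(d))$. We refute this by induction on $t$: specialise $p_{t-2}$ onto a generic hyperplane $H_0\subset\mathbb{P}^m$ and split the cohomology of the enlarged ideal via the trace/residue sequence into a degree $d-1$ problem on $H_0\cong\mathbb{P}^{m-1}$ and a degree $d$ problem with one fewer simple point in $\mathbb{P}^m$; the residue is handled by the $(t-1)$-case of the theorem and the trace by a direct computation. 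The main obstacle is that $W_\nu$ and the hypothetical $W_{\nu'}$ are non-reduced, so their trace/residue decompositions under the Horace specialisation require a delicate case analysis depending on whether $\nu_{red}$ (resp.\ $\nu'_{red}$) lies on $H_0$ and whether the $\nu$-direction is tangent to $H_0$; the bounds $d\ge 7$ for $m\le 4$ and $t\le\beta+1$ are precisely what is needed to keep every one of these cases within the reach of the inductive hypothesis.
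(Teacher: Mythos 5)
Your setup is sound and matches the paper's: containment of $T_P\tau(X,t)$ in $H$ does translate, via Terracini for joins, into the vanishing of a degree $d$ form $F$ on $Z=\bigcup_{i}2p_i\cup Z(O,L)$, where your length-$(2m+1)$ scheme $W_\nu$ is exactly the $(2,3)$-point of Definition \ref{23point}, and $h^1(\mathcal I_Z(d))=0$ does hold in the stated range (the paper gets it by squeezing $Z$ inside the schemes of Lemmas \ref{h1} and \ref{h2}). The gap is in your crux step (c)--(d). You propose to exclude extra singularities of a general $F\in|\mathcal I_Z(d)|$ by checking that $h^0(\mathcal I_{Z\cup 2p'}(d))$ (resp.\ $h^0(\mathcal I_{Z\cup W_{\nu'}}(d))$) drops for a \emph{general} added $p'$ (resp.\ $\nu'$), verified by a Horace induction. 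That criterion cannot prove what the theorem asserts. By Bertini, the singular locus of a general member of $|\mathcal I_Z(d)|$ is contained in the base locus of that system, which is a proper closed subset determined by $Z$; a general $p'$ lies outside it no matter what, so the $h^0$-drop at a general added point is automatic and detects nothing. The actual danger --- and the reason the paper introduces ``drip defectivity'' at all --- is a positive-dimensional component of the contact locus passing \emph{through} the prescribed points $p_i$ or $O$, or an extra $(2,3)$-point supported at a \emph{special} (not general) location tied to $Z$. Note that $\tau(X)$ itself is $0$-weakly defective, so this danger is not hypothetical; no dimension count against a generic added condition can rule it out.

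The correct replacement, which your sketch is missing, is the mechanism of Lemma \ref{a1}: one raises the multiplicity at the \emph{given} points (a triple point at some $p_j$, a quadruple point at $O$), checks that these still impose independent conditions (this is precisely what [bb] Lemmas 5 and 6, i.e.\ Lemmas \ref{h1} and \ref{h2}, provide, and is where the hypotheses $d\ge 6$, $d\ge 7$ for $m\le 4$, and $t\le\beta+1$ are actually used), and then runs Bertini on the blow-up at the remaining points to conclude that each singularity is \emph{isolated}; combining this with \cite{cc}, Theorem 1.4 pins the singular locus of a general member down to exactly $Z_{red}$ and gives the ordinary nodes. Finally, for the assertion about tangency to $\tau(X)\setminus X$, the paper does not argue by a Horace case analysis on a varying $W_{\nu'}$: it observes that tangency of $H$ to $\tau(X)$ at any $Q'\notin X$ forces $H$ to contain a full $(2,3)$-point of $X$ supported at the corresponding $E\in X$, which the already-established singularity structure of $H\cap X$ forbids unless $E=O$, and then invokes the uniqueness statement of Theorem \ref{i1} to force $Q'$ onto the line $\langle\nu\rangle$. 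Your proposal never uses Theorem \ref{i1} and has no substitute for this step.
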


\section{Preliminaries}\label{S2}

\begin{notation}
Let $Y$ be an integral quasi-projective variety and $Q\in Y_{reg}$. Let $\{kQ,Y\}$ denote the  $(k-1)$-th infinitesimal neighborhood of $Q$ in $Y$, i.e. the
closed subscheme of $Y$ with $(\mathcal {I}_Q)^k$ as its ideal sheaf. If $Y= \mathbb {P}^m$, then we write $kQ$ instead of $\{kQ,\mathbb {P}^m\}$. The scheme $\{kQ,Y\}$ will be called
a $k$-point of $Y$. We also say that a $2$-point is a double point, that a $3$-point is a triple point and a $4$-point is a quadruple point.
\end{notation}

We give here the definition of a $(2,3)$-point as it is in \cite{cgg}, p. 977.

\begin{definition}\label{23point}
Let $\mathfrak{q}\subset \mathbb{K}[x_0, \ldots , x_m]$ be the reduced ideal of a simple point  $Q\in \mathbb{P}^m$, and let $l\subset \mathbb{K}[x_0, \ldots , x_m]$ be the ideal of a reduced line $L\subset \mathbb{P}^m$ through $Q$. We say that $Z(Q,L)$ is a  $(2,3)$-point if it is the zero-dimensional scheme whose representative ideal is $({\mathfrak{q}}^3 + l^2)$.
\end{definition}

\begin{remark}
Notice that $2Q \subset Z(Q,L) \subset 3Q$. 
\end{remark}

We recall the notion of weak non-defectivity for an integral and non-degenerate projective variety $Y\subset \mathbb {P}^r$ (see \cite{cc}). For any closed subscheme $Z\subset \mathbb {P}^r$ set:
\begin{equation}\label{H}
\mathcal {H}(-Z):= \vert \mathcal {I}_{Z,\mathbb {P}^r}(1)\vert.
\end{equation} 

\begin{notation}\label{contact}
Let $Z\subset \mathbb{P}^r$ be a zero-dimensional scheme
such that $\{2Q,Y\} \subseteq Z$ for all $Q\in Z_{red}$. 
Fix $H\in \mathcal {H}(-Z)$ where $\mathcal {H}(-Z)$ is defined in (\ref{H}). Let $H_c$ be the closure in $Y$
of the set of all $Q\in Y_{reg}$ such that $T_QY \subseteq H$. 
\\ 
The contact locus $H_Z$ of $H$ is the union of all irreducible components of $H_c$ containing at least one point of $Z_{red}$.
\\
We use the notation $H_Z$ only in the case $Z_{red} \subset Y_{reg}$. 
\end{notation}
Fix
an integer
$k
\ge 0$ and assume that $\sigma _{k+1}(Y) $ doesn't fill up the ambient space $ \mathbb {P}^r$. 
Fix a general $(k+1)$-uple of points in $Y$ i.e.
$(P_0,\dots ,P_k)\in Y^{k+1}$ and set 
\begin{equation}\label{Zweak}
Z:= \cup _{i=0}^{k} \{2P_i,Y\}.
\end{equation}
The following definition of weakly $k$-defective variety coincides with the one given in \cite{cc}.
\begin{definition}
A variety $Y\subset \mathbb{P}^r$ is said to be {\it weakly $k$-defective}
if $\dim (H_Z)>0$  for $Z$ as in (\ref{Zweak}). 
\end{definition}

In \cite{cc}, Theorem 1.4, it is proved that if $Y\subset \mathbb{P}^r$ is not weakly $k$-defective, then $H_Z = Z_{red}$ and that
$\mbox{Sing}(Y\cap H) = ( \mbox{Sing}(Y)\cap H)\cup Z_{red}$ for a general $Z=\cup _{i=0}^{k}
\{2P_i,Y\}$ and a general $H\in \mathcal {H}(-Z)$. Notice that
$Y$ is weakly
$0$-defective if and only if its dual variety
$Y^\ast
\subset \mathbb {P}^{r\ast }$ is not a hypersurface. 

In \cite{cc2} the same authors considered also
the case in which $Y$ is not irreducible and hence its joins have as irreducible components the joins of different
varieties.

\begin{lemma}\label{a1}
Fix an integer $y\ge 2$, an integral projective variety $Y$, $L\in \mbox{Pic}(Y)$ and $P\in Y_{reg}$. Set $x:= \dim (Y)$.
Assume $h^0(Y,\mathcal {I}_{(y+1)P}\otimes L) = h^0(Y,L) -\binom{x+y}{x}$. Fix a general $F\in \vert \mathcal
{I}_{yP}\otimes L\vert$. Then $P$ is an isolated singular point of $F$.
\end{lemma}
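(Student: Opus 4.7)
The plan is to trivialize $L$ locally at $P$ and analyze $F$ through its Taylor expansion. Choose local coordinates $(x_1,\dots,x_x)$ on $Y_{reg}$ at $P$ and trivialize $L$ in a neighborhood of $P$, so that any $F\in H^0(Y,\mathcal{I}_{yP}\otimes L)$ becomes a local function $f = f_y + f_{y+1} + \cdots$ with $f_k$ homogeneous of degree $k$. From the exact sequence $0\to \mathcal{I}_{(y+1)P}\otimes L\to L\to L\otimes\mathcal{O}_{(y+1)P}\to 0$, the dimension hypothesis is equivalent to the surjectivity of the restriction $H^0(Y,L)\to H^0(L\otimes\mathcal{O}_{(y+1)P})$. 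Since $\mathcal{I}_{yP}/\mathcal{I}_{(y+1)P}\cong \mathrm{Sym}^y(\mathfrak{m}_P/\mathfrak{m}_P^2)$, this surjectivity forces the induced map
\[
H^0(Y,\mathcal{I}_{yP}\otimes L)\longrightarrow \mathrm{Sym}^y(\mathfrak{m}_P/\mathfrak{m}_P^2)\otimes L|_P,\qquad F\mapsto f_y,
\]
to be surjective as well. Hence for a general $F$ the leading form $f_y$ is a general homogeneous polynomial of degree $y$ in $x$ variables.

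Next I would show that for a general homogeneous polynomial $f_y$ of degree $y\ge 2$ in $x$ variables, the common zero locus of the partials $\partial f_y/\partial x_i$ in $\mathbb{A}^x$ reduces set-theoretically to the origin. The case $x=1$ is immediate from $f_y = c\,x_1^y$. For $x\ge 2$, Bertini guarantees that the projective hypersurface $\{f_y=0\}\subset \mathbb{P}^{x-1}$ is smooth; if $\nabla f_y(Q)=0$ for some $Q\ne 0$, Euler's identity $y\,f_y(Q)=\sum_i Q_i\,\partial f_y/\partial x_i(Q)$ would force $f_y(Q)=0$, so $[Q]$ would be a singular point of $\{f_y=0\}$, a contradiction.

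To conclude, consider the ideal $J = (\partial f/\partial x_1,\dots,\partial f/\partial x_x)\subseteq \mathcal{O}_{Y,P}$ that cuts out the critical locus of $f$; the singular locus of the divisor $\{F=0\}$ near $P$ is contained in $V(J)$. Each $\partial f/\partial x_i$ has initial form $\partial f_y/\partial x_i$ of degree $y-1$, so the initial ideal $\mathrm{gr}_{\mathfrak{m}_P}(J)$ inside $\mathrm{gr}_{\mathfrak{m}_P}(\mathcal{O}_{Y,P})\cong \mathrm{Sym}^\bullet(\mathfrak{m}_P/\mathfrak{m}_P^2)$ contains the ideal $(\partial f_y/\partial x_i)_i$, whose zero locus in the tangent space is, by the previous step, the origin. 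Since $\dim(\mathcal{O}_{Y,P}/J)$ equals the dimension of the tangent cone of $V(J)$ at $P$, this dimension is zero, so $P$ is isolated in $V(J)$ and hence in the singular locus of $\{F=0\}$.

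The heart of the argument is the first step, namely the reduction to genericity of the degree-$y$ leading term $f_y$, which is precisely what the dimension hypothesis provides. Once that is granted the remainder is a standard combination of Bertini with a tangent-cone computation, and the only delicate point is checking that the $y=2$ case behaves well; there $\nabla f_y$ is linear and the required nondegeneracy amounts to smoothness of a general quadric, which holds by Bertini.
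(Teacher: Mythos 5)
Your proof is correct, and it reaches the conclusion by a route that differs from the paper's in its second half. Both arguments extract exactly the same content from the hypothesis: since $(y+1)P$ imposes independent conditions on $|L|$, the map sending $F\in H^0(Y,\mathcal I_{yP}\otimes L)$ to its degree-$y$ initial form in $\mathrm{Sym}^y(\mathfrak m_P/\mathfrak m_P^2)\otimes L|_P$ is surjective, so $f_y$ is general. (The paper phrases this on the blow-up $u:Y'\to Y$ at $P$: with $M:=u^*L(-yE)$, the sequence $0\to M(-E)\to M\to \mathcal O_E(y)\to 0$ and the dimension count show that $H^0(Y',M)\to H^0(E,\mathcal O_E(y))$ is surjective.) Where you diverge is in how you convert genericity of $f_y$ into isolatedness of the singularity. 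The paper argues upstairs: surjectivity of the restriction plus the fact that $\mathcal O_E(y)$ is spanned imply that $M$ is spanned along $E$, hence base-point-free in a neighborhood of $E$, and Bertini then makes the general member of $|M|$ (the strict transform of $F$) smooth near $E$; pushing down, $F$ is smooth in a punctured neighborhood of $P$. You instead stay in local coordinates: a general degree-$y$ form has no critical points outside the origin (smoothness of the general degree-$y$ hypersurface plus Euler's identity, valid in characteristic $0$), and since the initial forms $\partial f_y/\partial x_i$ lie in the associated graded ideal of $J=(\partial f/\partial x_i)$, the tangent cone of $V(J)$ at $P$ is a point, so $\dim_P V(J)=0$ by the equality of the Krull dimension of a local ring and of its associated graded. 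Your version is more elementary and explicit (no blow-up, at the price of the $\mathrm{gr}$-dimension lemma and a coordinate trivialization of $L$); the paper's is coordinate-free and yields slightly more, namely that the projectivized tangent cone of $F$ at $P$ is a smooth hypersurface. Both are complete proofs.
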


\begin{proof}
Let $u: Y' \to Y$ denote the blowing-up of $Y$ at $P$ and $E:= u^{-1}(P)$ the exceptional divisor. Since $\dim (Y)=x$, we have $E\cong \mathbb {P}^{x-1}$.
Set $R:= u^\ast (L)$. For each integer $t\ge 0$ we have $u_\ast (R(-tE)) \cong \mathcal {I}_{tP}\otimes L$. Thus the
push-forward
$u_\ast$ induces an isomorphism between the linear system $\vert R(-tE)\vert$ on $Y'$ and the linear system $\vert  \mathcal {I}_{tP}\otimes
L\vert$ on $Y$. Set $M:= R(-yE)$. Since $\mathcal {O}_{Y'}(E) \vert E \cong \mathcal {O}_E(-1)$ (up to the identification of
$E$ with
$\mathbb {P}^{x-1}$), we have $R(-tE)\vert E
\cong \mathcal {O}_E(t)$ for all $t\in \mathbb {N}$. Consider on $Y'$ the exact sequence:
\begin{equation}\label{eqa1}
0 \to M(-E) \to M \to \mathcal {O}_E(y) \to 0
\end{equation}
Our hypothesis implies that $h^0(Y,\mathcal {I}_{yP}\otimes L) = h^0(Y,L) -\binom{x+y-1}{x}$. Thus our assumption implies
$h^0(Y',M(-E)) = h^0(Y',R) -\binom{x+y}{x} = h^0(Y',R)- \binom{x+y-1}{x}-
 \binom{x+y-1}{x-1} = h^0(Y',M) -h^0(E,\mathcal {O}_E(y))$. Thus (\ref{eqa1})
gives the surjectivity of the restriction map $\rho : H^0(Y',M) \to H^0(E,M\vert_{ E})$. Since $y\ge 0$, the line bundle $M\vert
E$ is spanned. Thus the surjectivity of $\rho$ implies that $M$ is spanned at each point of $E$. Hence $M$ is spanned in a
neighborhood of $E$. Bertini's theorem implies that a general $F'\in \vert M\vert$ is smooth in a neighborhood of $E$. Since
$F$ is general and $\vert M\vert \cong \vert \mathcal {I}_{yP}\otimes L\vert$, $P$ is an isolated singular point of
$F$.\end{proof}

\section{$\tau (X,t)$ is not weak defective}\label{S4}

In this section we fix integers $m \ge 2$, $d\ge 3$ and set $N= \binom{m+d}{m}-1$ and $X:= X_{m,d}$. The variety $\tau (X)$ is $0$-weakly defective, because
a general tangent space of $\tau (X)$ is tangent to $\tau (X)$ along a line. Terracini's lemma for joins implies that a general
tangent space of $\tau (X,t)$ is tangent to $\tau (X,t)$ at least along a line (see Remark \ref{e1}). Thus $\tau (X,t)$ is weakly $0$-defective.
To handle this problem and prove Theorem \ref{i1} we introduce another definition, which is tailor-made to this particular
case. As in \cite{cgg} we want to work with zero-dimensional schemes on $X$, not on $\tau (X)$ or $\tau (X,t)$. We
consider 
$X=j_{m,d}(\mathbb{P}^m)$ and the 0-dimensional scheme 
$Z\subset X$ which is the image (via $j_{m,d}$) of the general disjoint union of $t-2$ double points and one $(2,3)$-point of $\mathbb{P}^m$, in the case of \cite{cgg} (see Definition 1). We will often work by identifying $X$ with $\mathbb{P}^m$, so e.g. notice that $\mathcal{H}(-\emptyset)$ is just $|\mathcal{O}_{\mathbb{P}^m}(d)|$.

\begin{remark}\label{e1}
Fix $P\in X$ and $Q\in T_PX\setminus \{P\}$. Any two such pairs $(P,Q)$ are projectively equivalent for
the natural action of $\mbox{Aut}(\mathbb {P}^m)$. We have $Q\in \tau (X)_{reg}$ and $T_Q\tau (X) \supset T_PX$.
Set $D:= \langle \{P,Q\}\rangle$. It is well-known that $D\setminus \{P\}$ is the  set of all $O\in \tau (X)_{reg}$ such
that $T_Q\tau (X) = T_O\tau (X)$ (e.g. use that the set of all $g\in \mbox{Aut}(\mathbb {P}^m)$ fixing $P$ and the line containing
$P$ associated to the tangent vector induced by $Q$
acts transitively on $T_PX\setminus D$).
\end{remark}

\begin{definition}
Fix a general $(O_1,\dots ,O_{t-2},O)\in (\mathbb {P}^m)^{t-1}$ and a general line $L\subset \mathbb {P}^m$
such that $O\in L$. Set $Z:= Z(O,L)\cup \bigcup _{i=1}^{t-2} 2O_i$. We say that the
variety 
$\tau (X,t)$ is not {\it drip defective} if
$\dim (H_Z)=0$ for a general $H\in \vert \mathcal {I}_Z(d)\vert$.
\end{definition}

We are now ready for the following lemma.

\begin{lemma}\label{w1}
Fix an integer $t \ge 3$ such that $(m+1)t < n$. Let $Z_1\subset \mathbb {P}^m$ be a general union of a quadruple point and
$t-2$ double points. Let $Z_2$ be a general union of $2$ triple points and $t-2$ double points. Fix a general disjoint union $Z = Z(O,L)\cup (\cup _{i=1}^{t-2} 2P_i)$ ,  where $Z(O,L)$ is a $(2,3)$-point as in Definition \ref{23point}
and $O$, $L$ and $\{P_1,\dots ,P_{t-2}\} \subset \mathbb {P}^m$ are general. Assume
$h^1(\mathbb {P}^m,\mathcal {I}_{Z_1}(d)) =  h^1(\mathbb {P}^m,\mathcal {I}_{Z_2}(d)) =0$. Then:

\quad (i) $h^1(\mathbb {P}^m,\mathcal {I}_Z(d))=0$; 

\quad (ii) $\tau (X,t)$
is not drip defective;

\quad (iii) a general $H\in \mathcal {H}(-Z)$ has an ordinary quadratic singularity at each $P_i$.
\end{lemma}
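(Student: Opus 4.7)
The three parts will be proven by two containment arguments (reducing to the $Z_1$ and $Z_2$ hypotheses), together with Lemma~\ref{a1} and a direct check at $O$. For (i), by genericity we may take $Z_1$ to have support $\{O, P_1, \ldots, P_{t-2}\}$; since $Z(O,L) \subset 3O \subset 4O$, we get $Z \subset Z_1$. The short exact sequence
\[
0 \to \mathcal{I}_{Z_1}(d) \to \mathcal{I}_Z(d) \to (\mathcal{I}_Z/\mathcal{I}_{Z_1})(d) \to 0
\]
has zero-dimensional quotient, so $H^1(\mathbb{P}^m,\mathcal{I}_Z(d))$ is a quotient of $H^1(\mathbb{P}^m,\mathcal{I}_{Z_1}(d)) = 0$.

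For (iii), fix $i$ and let $W_i := Z \setminus 2P_i$. Pick a fresh general point $R$ and set $Z_2' := 3O \cup 3P_i \cup 2R \cup \bigcup_{j \ne i} 2P_j$; this is a general $Z_2$, and $W_i \cup 3P_i \subset Z_2'$ because $Z(O,L) \subset 3O$. The same exact-sequence trick yields $h^1(\mathcal{I}_{W_i \cup 3P_i}(d)) = 0$, while (i) applied to $W_i \subset Z$ gives $h^1(\mathcal{I}_{W_i}(d)) = 0$. Comparing degrees,
\[
h^0(\mathcal{I}_{W_i \cup 3P_i}(d)) = h^0(\mathcal{I}_{W_i}(d)) - \binom{m+2}{m}.
\]
Running the proof of Lemma~\ref{a1} on the blowup $u \colon Y' \to \mathbb{P}^m$ at $P_i$, and using that $u^* \mathcal{I}_{W_i}$ is trivial near the exceptional divisor $E$ since $W_i$ avoids $P_i$, the sheaf $M := u^*\mathcal{O}(d) \otimes u^*\mathcal{I}_{W_i} \otimes \mathcal{O}(-2E)$ is globally generated near $E$ and surjects onto $H^0(E, \mathcal{O}_E(2))$; Bertini then produces an ordinary node of $F$ at $P_i$.

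For (ii), part (iii) shows that each $P_i$ is an isolated singular point of $F$. At $O$, the local inclusion $F \in \mathfrak{q}^3 + l^2$ forces the degree-$2$ part of $F$ to be a quadric $G$ in $x_2, \ldots, x_m$ alone; for general $F$, $G$ is nondegenerate, so $\partial F/\partial x_j$ for $j \ge 2$ cut out $L$ to first order near $O$. Restricted to $L$, the partial $\partial F/\partial x_1$ reduces to $3\alpha x_1^2 + O(x_1^3)$, where $\alpha$ is the generic nonzero coefficient of $x_1^3$ in $F$, yielding an isolated zero at $O$. Hence $O$ is also an isolated singular point of $F$, so the components of the contact locus $H_c$ through $Z_{\mathrm{red}}$ are all $0$-dimensional and $\dim H_Z = 0$. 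The main obstacle is this isolated-singularity claim at $O$: the $(2,3)$-structure forces the tangent cone of $F$ at $O$ to be a cone with vertex $L$, which naively suggests a $1$-parameter family of singularities propagating along $L$; ruling it out requires the leading-order analysis of $\partial F/\partial x_1|_L$ together with the $\mathfrak{q}^3$-condition on $F$.
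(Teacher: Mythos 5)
Your proposal is essentially correct and, for parts (i) and (iii), runs parallel to the paper's argument: the paper also proves (i) by a containment (it uses $Z\subset 3O\cup\bigcup 2P_i\subset Z_2$ rather than your $Z\subset Z_1$), and it also handles the points $P_i$ by Lemma \ref{a1} with $y=2$ on a blow-up, the only cosmetic difference being that the paper blows up \emph{all} the remaining base points so that Lemma \ref{a1} applies verbatim to a line bundle, whereas you keep $\mathcal{I}_{W_i}$ as an ideal-sheaf twist and rerun the proof; both work, and your degree count $\binom{m+2}{2}-(m+1)=\binom{m+1}{2}=h^0(E,\mathcal{O}_E(2))$ is the right one. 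The genuine divergence is at $O$: the paper passes to the larger scheme $W=3O\cup\bigcup 2P_i$ and applies Lemma \ref{a1} with $y=3$ (this is where the quadruple-point hypothesis $h^1(\mathcal{I}_{Z_1}(d))=0$ enters), then descends to $\mathcal{H}(-Z)$ by openness of the isolated-singularity condition; you instead analyze the $2$- and $3$-jets of a general $F\in|\mathcal{I}_Z(d)|$ at $O$ directly. Your local analysis is sound and more explicit about what the singularity at $O$ looks like (a small imprecision: the other partials cut out a smooth curve $\widetilde L$ only \emph{tangent} to $L$ at $O$, so $\partial F/\partial x_1$ should be restricted to $\widetilde L$ rather than to $L$; since $\widetilde L$ agrees with $L$ to first order, the leading term $3\alpha x_1^2$ is unchanged).

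The one step you must tighten is the pair of genericity claims ``$G$ is nondegenerate'' and ``$\alpha\ne 0$'', which you assert as if automatic. They are exactly the content that the $h^1$-vanishing hypotheses are there to supply: surjectivity of $H^0(\mathcal{I}_Z(d))\to(\mathcal{I}_{Z(O,L)}/\mathcal{I}_{3O})_O$ (a space of dimension $\binom{m}{2}$, the quadrics in $x_2,\dots,x_m$) follows from $h^1(\mathcal{I}_{3O\cup\bigcup 2P_i}(d))=0$, i.e.\ from the $Z_2$ hypothesis, and the nonvanishing of the $x_1^3$-coefficient requires that the scheme $Z$ enlarged by that single cubic condition still imposes independent conditions, which follows because that enlarged scheme is contained in $Z_1=4O\cup\bigcup 2P_i$. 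Without this last containment the quadruple-point hypothesis would play no essential role in your argument at $O$ (you use $Z_1$ only for the soft containment in (i), where $Z_2$ would also do), which should be a warning sign that something is being used silently. Once these two sentences are added, the proof is complete.
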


\begin{proof}
Set $W:= 3O\cup (\cup _{i=1}^{t-2} 2P_i)$. The definition of a $(2,3)$-point gives that $Z(O,L) \subset 3O$. Thus $Z\subset W
\subset Z_2$. Hence  $h^1(\mathbb {P}^m,\mathcal {I}_Z(d)) \le h^1(\mathbb {P}^m,\mathcal {I}_{Z_2}(d)) =0$. Hence part (i) is proven.\\

To prove part
(ii) of the lemma we need to prove that $\dim (H_Z)=0$ for a general $H\in \mathcal {H}(-Z)$. Since $W \subsetneqq Z_1$ and $h^1(\mathbb {P}^m,\mathcal {I}_{Z_1}(d)) =0$, we have $\mathcal {H}(-W)\ne \emptyset$.
Since $W_{red}=Z_{red}$ and $Z\subset W$, to prove parts (ii) and (iii) of the lemma it is sufficient to prove $\dim ((H_W)_c)=0$ for a general
$H_W\in \mathcal {H}(-W)$, where $W$ is as above and $(H_W)_c$ is as in Notation \ref{contact}. Assume that this is not true, therefore:
\begin{enumerate}
 \item either the contact locus $(H_W)_c$ contains a positive-dimensional component $J_i$ containing some of the $P_i$'s, for $1 \le i
\le t-2$,
\item or the contact locus $(H_W)_c$ contains a positive-dimensional
irreducible component $T$ containing $Q$.
\end{enumerate}
Set $Z_3:=\cup _{i=1}^{t-3} 2P_i$ and $Z':=
3O\cup Z_3$. 

\quad (a) Here we assume the existence of a positive dimensional component $J_i\subset (H_W)_c$ containing one of the $P_i$'s, say for example $J_{t-2}\ni P_{t-2}$.
Thus a general element of $ \vert \mathcal {I}_{W}(d)\vert$ is singular along a positive-dimensional irreducible
algebraic set containing $P_{t-2}$. Let $w: M\to \mathbb {P}^m$ denote the blowing-up of $\mathbb {P}^m$ at the points
$O,P_1,\dots ,P_{t-3}$. Set $E_0:= w^{-1}(O)$ and $E_i:= w^{-1}(P_i)$, $1 \le i \le t-3$. Let $A$ be the only point
of $M$ such that $w(A) =P_{t-2}$. For each integer $y\ge 0$ we have $w_\ast (\mathcal {I}_{yA}\otimes w^\ast (\mathcal
{O}_{\mathbb {P}^m}(d))(-3E_0-2E_1-\cdots -2E_{t-3})) = \mathcal {I}_{Z'\cup yP_{t-2}}(d)$. Applying Lemma
\ref{a1} to the variety $M$, the line bundle $w^\ast (\mathcal
{O}_{\mathbb {P}^m}(d))(-3E_0-2E_1-\cdots -2E_{t-3})$, the point $A$ and the integer $y=2$ we get a contradiction. 

\quad (b) Here we prove the non-existence of a positive-dimensional $T\subset (H_W)_c$ containing $O$. Let $w_1: M_1\to \mathbb
{P}^m$ denote the blowing-up of $\mathbb {P}^m$ at the points
$P_1,\dots ,P_{t-2}$. Set $E_i:= w_1^{-1}(P_i)$, $1 \le i \le t-2$. Let $B\in M_1$ be the only
point of $M_1$ such that $w_1(B)=O$. For each integer $y\ge 0$ we have $w_{1\ast }(\mathcal {I}_{yB}\otimes w_1^\ast (\mathcal
{O}_{\mathbb {P}^m}(d))(-2E_1-\cdots -2E_{t-2})) = \mathcal {I}_{Z'\cup yO}(d)$. Since $h^1(\mathbb {P}^m,\mathcal
{I}_{Z_2}(d)) =0$ and $\vert \mathcal
{I}_{Z_2}(d)\vert \subset \vert \mathcal {I}_Z(d)\vert$, by  Lemma \ref{a1} (with $y=3$) we get a contradiction.
\end{proof} 

In \cite{bb}, Lemmas 5 and 6, we proved the following two lemmas:

\begin{lemma}\label{h1}
Fix integers $m \ge 2$ and $d\ge 5$. If $m\le 4$, then assume $d\ge 6$. Set $\alpha := \lfloor \binom{m+d-1}{m}/(m+1)\rfloor$.
Let $Z_i \subset \mathbb {P}^m$, $i=1,2$, be a general union of $i$ triple points and $\alpha -i$ double points.
Then $h^1(\mathcal {I}_{Z_i}(d))=0$.
\end{lemma}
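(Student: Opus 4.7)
The plan is to specialize the triple points to a general hyperplane and combine a Horace-type restriction argument with the Alexander--Hirschowitz theorem. Concretely, fix a general hyperplane $H\subset\mathbb{P}^m$, $H\cong\mathbb{P}^{m-1}$, and specialize $Z_i$ to a scheme $Z'_i$ obtained by moving all $i$ triple points onto $H$ while keeping the $\alpha-i$ double points in general position off $H$. By upper semicontinuity of $h^1$, it suffices to establish $h^1(\mathbb{P}^m,\mathcal{I}_{Z'_i}(d))=0$.

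From the Castelnuovo restriction sequence
\[
0 \to \mathcal{I}_{\mathrm{Res}_H Z'_i}(d-1) \to \mathcal{I}_{Z'_i}(d) \to \mathcal{I}_{Z'_i\cap H,H}(d) \to 0
\]
one extracts the bound $h^1(\mathbb{P}^m,\mathcal{I}_{Z'_i}(d)) \le h^1(\mathbb{P}^m,\mathcal{I}_R(d-1)) + h^1(H,\mathcal{I}_T(d))$, where $R:=\mathrm{Res}_H Z'_i$ and $T:=Z'_i\cap H$. A direct inspection of the specialization identifies $T$ with a general union of $i$ triple points on $\mathbb{P}^{m-1}$, and $R$ with a general union of $\alpha$ double points on $\mathbb{P}^m$: each specialized triple $3P_j$ contributes a triple point on $H$ to the trace and the double point $2P_j$ to the residue, so combining with the $\alpha-i$ untouched double points of $Z'_i$ yields exactly $\alpha$ general double points in $R$.

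For the residue, the definition of $\alpha$ gives $\alpha(m+1)\le\binom{m+d-1}{m}=h^0(\mathcal{O}_{\mathbb{P}^m}(d-1))$, and the Alexander--Hirschowitz theorem yields $h^1(\mathbb{P}^m,\mathcal{I}_R(d-1))=0$ outside its exceptional list; the hypothesis $d\ge 5$, sharpened to $d\ge 6$ when $m\le 4$, is tailored precisely to exclude the defective configurations of Alexander--Hirschowitz in degree $d-1$ (notably $d-1=4$ with small $m$). For the trace, $i\le 2$ general triple points on $\mathbb{P}^{m-1}$ in degree $d\ge 5$ manifestly impose independent conditions: when $m=2$ this is the elementary inequality $3i\le d+1$ on $\mathbb{P}^1$, and when $m\ge 3$ a further application of Alexander--Hirschowitz (or a short direct computation restricting once more to the line through the two triple points) gives $h^1(H,\mathcal{I}_T(d))=0$.

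The main obstacle is the numerical balancing of the two vanishings. The choice $\alpha=\lfloor\binom{m+d-1}{m}/(m+1)\rfloor$ is exactly what makes $\alpha$ double points subcritical for $\mathcal{O}_{\mathbb{P}^m}(d-1)$, and the lower bound on $d$ is the smallest one that simultaneously places us outside the defective configurations of Alexander--Hirschowitz in the residue degree; if either constraint were relaxed (for instance at $m=2$, $d=5$, $\alpha=5$, the classical quartic-through-five-double-points case), the naive residue/trace split would fail and one would have to employ the more delicate Horace differential method.
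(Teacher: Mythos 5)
The paper does not actually prove this lemma: it is imported verbatim from \cite{bb}, Lemma 5, so there is no internal argument to compare against. Your Horace specialization reducing to Alexander--Hirschowitz is a correct, self-contained substitute, and the numerics check out: the residual lives in degree $d-1\ge 4$, the definition of $\alpha$ gives $\alpha (m+1)\le \binom{m+d-1}{m}=h^0(\mathcal {O}_{\mathbb {P}^m}(d-1))$, and the hypothesis $d\ge 6$ for $m\le 4$ forces $d-1\ge 5$ exactly where the degree-$4$ Alexander--Hirschowitz exceptions ($5$ points in $\mathbb {P}^2$, $9$ in $\mathbb {P}^3$, $14$ in $\mathbb {P}^4$) live, while for $m\ge 5$ the case $d-1=4$ has no exceptions. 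Two points in your write-up deserve to be made explicit. First, your residual $R$ is a union of $\alpha$ double points of which $i$ are supported on $H$; to invoke Alexander--Hirschowitz you need $R$ to be a \emph{general} union of double points, and this holds only because $i\le 2\le m$: any $i\le m$ general points of $\mathbb {P}^m$ lie on a general hyperplane, so forcing the first $i$ supports onto $H$ does not specialize the configuration at all (for the same reason the semicontinuity step for $Z'_i$ is not really needed). Second, Alexander--Hirschowitz is a statement about double points, so it does not directly give the vanishing for the trace $T$ of two triple points in $H\cong \mathbb {P}^{m-1}$ when $m\ge 3$; what you need there is the classical fact that two fat points of multiplicities $a$ and $b$ impose independent conditions on degree $d$ forms if and only if $d\ge a+b-1$, which for $a=b=3$ is precisely $d\ge 5$ (and for $m=2$ reduces to your divisor count $3i\le d+1$ on $\mathbb {P}^1$). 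With these two justifications supplied, your proof is complete and is very much in the spirit of the Horace-type arguments used in \cite{bb} for this and the companion Lemma 6.
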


\begin{lemma}\label{h2}
Fix integers $m \ge 2$ and $d\ge 6$. If $m\le 4$, then assume $d\ge 7$. Set $\beta := \lfloor \binom{m+d-2}{m}/(m+1)\rfloor$.
Let $Z \subset \mathbb {P}^m$ be a general union of one quadruple point and $\beta - 1$ double points.
Then $h^i(\mathcal {I}_Z(d))=0$.
\end{lemma}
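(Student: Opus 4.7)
The plan is to prove $h^1(\mathcal{I}_Z(d)) = 0$ by a Horace-type degeneration combined with induction on $m$, reducing the residual piece (in degree $d-1$) to Lemma \ref{h1}. As a preliminary consistency check, the length $\binom{m+3}{m}+(\beta-1)(m+1)$ of $Z$ is at most $\binom{m+d}{m}$ throughout the stated range of $(m,d)$; this follows from the bound $\beta(m+1)\le\binom{m+d-2}{m}$ together with the inequality $\binom{m+d}{m}-\binom{m+d-2}{m}\ge\binom{m+3}{m}-(m+1)$, which is elementary for $d\ge 6$ (resp.\ $d\ge 7$ when $m\le 4$).

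The heart of the argument is the following Horace specialization. Fix a hyperplane $H\cong\mathbb{P}^{m-1}\subset\mathbb{P}^m$ and specialize the support $Q$ of the quadruple point together with $k$ of the $\beta-1$ double-point supports to lie on $H$; call the resulting special scheme $Z'$. The exact sequence
$$0 \longrightarrow \mathcal{I}_{\mathrm{Res}_H Z'}(d-1) \longrightarrow \mathcal{I}_{Z'}(d) \longrightarrow \mathcal{I}_{Z'\cap H,\,H}(d) \longrightarrow 0$$
combined with upper semicontinuity of $h^1$ reduces the problem to showing $h^1=0$ separately for the trace $Z'\cap H$ (in $H$, degree $d$) and for the residual $\mathrm{Res}_H Z'$ (in $\mathbb{P}^m$, degree $d-1$). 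The standard Horace formulas yield $\mathrm{Res}_H(4Q)=3Q$ and $\mathrm{Res}_H(2P)=\{P\}$ whenever $P\in H$, so $Z'\cap H$ is a general union in $H$ of one quadruple point and $k$ double points, while $\mathrm{Res}_H Z'$ consists of one triple point at $Q$, $k$ simple points on $H$, and $\beta-1-k$ general double points off $H$.

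I would choose $k$ to equal the analogue of $\beta-1$ in dimension $m-1$, so that the trace matches the statement of the present lemma in $\mathbb{P}^{m-1}$ and is covered by the inductive hypothesis on $m$. For the residual, I would enlarge it by upgrading the $k$ simple points to double points at the same supports, obtaining a scheme $W$ of the shape ``one triple point and $\beta-1$ double points'' with $\mathrm{Res}_H Z'\subseteq W$. Since the threshold $\alpha$ of Lemma \ref{h1} evaluated at degree $d-1$ is precisely our $\beta$, Lemma \ref{h1} delivers $h^1(\mathcal{I}_W(d-1))=0$ for a \emph{generic} $W$ of this shape, and the containment propagates the vanishing to $\mathrm{Res}_H Z'$.

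The principal obstacle, which I expect to absorb most of the technical work, is that the scheme $W$ so produced is \emph{not} in general position: its triple point and $k$ of its double points have supports constrained to $H$. Since upper semicontinuity of $h^1$ points the wrong way for a direct appeal to Lemma \ref{h1}, one must either iterate the Horace argument (specializing further within $H$) or invoke the differential Horace method of Alexander--Hirschowitz, which replaces certain double points by length-two jet schemes whose trace and residual on $H$ are both reduced points, so that both cohomology vanishings drop out simultaneously. A careful combinatorial choice of $k$ at each stage is needed so that the successive inductive hypotheses and Lemma \ref{h1} apply without remainder. The base cases $m=2$ and the smallest admissible values of $d$ would be handled by classical postulation results for fat point schemes in the plane and by direct numerical computation.
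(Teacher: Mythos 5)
First, note that the paper does not prove Lemma \ref{h2} at all: it is quoted verbatim from \cite{bb} (Lemma 6 there), so there is no internal proof to compare your argument with. That said, your Horace set-up is the natural one and your bookkeeping is correct: the trace of the specialized scheme on $H\cong\mathbb{P}^{m-1}$ is exactly the $(m-1)$-dimensional instance of the lemma, and the enlarged residual $W$ (one triple point and $\beta-1$ double points in degree $d-1$) matches the case $i=1$ of Lemma \ref{h1}, since $\alpha$ computed at $d-1$ equals $\beta$ computed at $d$ and the degree hypotheses of Lemma \ref{h1} at $d-1$ are exactly those of Lemma \ref{h2} at $d$.

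However, the proposal has a genuine gap, and you have named it yourself without closing it: the scheme $W$ produced by the residual step has its triple point and $k$ of its double points supported on the hyperplane $H$, whereas Lemma \ref{h1} asserts the vanishing only for a \emph{general} union of that shape, and upper semicontinuity of $h^1$ runs in the wrong direction. Saying that one ``must either iterate the Horace argument or invoke the differential Horace method'' identifies the obstacle but does not overcome it: the differential Horace lemma requires a precise choice of which double points contribute a length-one versus length-two piece to the trace, a verification that the modified trace and residual still fit against $h^0(\mathcal{O}_H(d))$ and $h^0(\mathcal{O}_{\mathbb{P}^m}(d-1))$ without overflow, and an induction whose hypotheses ($d\ge 7$ for $m\le 4$, $d\ge 6$ otherwise) must be checked to propagate from $m$ to $m-1$ and from $d$ to $d-1$. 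Likewise the base case $m=2$ (one quadruple point and $\lfloor\binom{d}{2}/3\rfloor-1$ double points in $\mathbb{P}^2$) is only gestured at. As it stands the proposal is a plausible plan whose hardest steps are deferred, not a proof; to repair it you should either carry out the differential Horace argument in full or simply cite \cite{bb}, as the paper itself does.
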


We will use the following set-up. 

\begin{notation}\label{e0}
Fix any $Q\in \tau (X)\setminus X$. For $d\geq 3$ the point $Q$ uniquely determines a point $B\in X$ and (up to a non-zero scalar) a tangent vector $\nu$ of $X$ with $\nu _{red} = \{B\}$.
We have $Q\in \langle \nu \rangle \setminus \{B\}$ and $T_Q\tau (X)$ is tangent to $\tau (X)\setminus X$ exactly along the line $\langle \nu \rangle
= \langle \{B,Q\}\rangle$. Let $O\in \mathbb {P}^m$ be the only point such that $j_{n,d}(O) =B$. Let $u_O: \widetilde{X} \to \mathbb {P}^m$ be the blowing-up of $O$. Let $E:= u_O^{-1}(O)$ denote the exceptional divisor.
For all integers $x,e$ set $\mathcal {O}_{\widetilde{X}}(x,eE):= u^\ast (\mathcal {O}_{\mathbb {P}^m}(x))(eE)$. Let $\mathcal {H}$ denote the linear system
$\vert \mathcal {O}_{\widetilde{X}}(d,-3E)\vert$ on $\widetilde{X}$.

\end{notation}

\begin{remark}\label{e1}
When $d \ge 4$, the line bundle $\mathcal {O}_{\widetilde{X}}(d,-3E)$ is very ample, $u_\ast (\mathcal {O}_{\widetilde{X}}(d,-3E))
= \mathcal {I}_{3O}(1)$, $h^0(\widetilde{X},\mathcal {O}_{\widetilde{X}}(d,-3E)) = \binom{m+d}{m} -\binom{m+2}{m}$ and
$h^i(\widetilde{X},\mathcal {O}_{\widetilde{X}}(d,-3E))=0$ for all $i>0$.\end{remark}

\begin{lemma}\label{e2} 
Fix integers $m\ge 2$ and $d \ge 5$.  If $m\le 4$, then assume $d\ge 6$. Set $\alpha := \lfloor \binom{m+d-1}{m}/(m+1)\rfloor$. Fix an integer $t$ such
that $3 \le t \le \alpha$. The linear system $\mathcal {H}$ on $\widetilde{X}$ is not $(t-3)$-weakly defective. For a general $O_1,\dots ,O_{t-2}\in \widetilde{X}$ a general
$H\in \vert \mathcal {H}(-2O_1-\dots -2O_
{t-2})\vert$ is singular only at the points $O_1,\dots ,O_{t-2}$ which are ordinary double points of $H$. 
\end{lemma}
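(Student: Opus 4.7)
The strategy is to follow the approach of Lemma~\ref{w1} in the blown-up setting: translate the question about $\mathcal{H}$ on $\widetilde{X}$ into one about a zero-dimensional scheme on $\mathbb{P}^m$ via $u_O$, then apply Lemma~\ref{h1} for the postulation vanishings and Lemma~\ref{a1} to rule out positive-dimensional contact loci and to force ordinary nodes.

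I would begin by identifying $\mathcal{H}(-2O_1-\cdots-2O_{t-2})$ on $\widetilde{X}$ with $|\mathcal{I}_W(d)|$ on $\mathbb{P}^m$, where $W:=3O\cup\bigcup_{i=1}^{t-2}2O_i$ (for general $O_j$'s none lies on $E$, so they are identified with general points of $\mathbb{P}^m$). Since $t\le\alpha$, the scheme $W$ is contained in a general union of one triple point and $\alpha-1$ double points, so Lemma~\ref{h1} (case $i=1$), together with the fact that $h^1$-vanishing is inherited by subschemes, gives $h^1(\mathbb{P}^m,\mathcal{I}_W(d))=0$. In particular the linear system is non-empty and of the expected dimension.

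Next, to show non-weak-defectivity and the ordinary node property at each $O_j$, I would apply Lemma~\ref{a1} with $y=2$ at each $O_j$; by symmetry take $j=t-2$. Let $w:M\to\widetilde{X}$ be the blow-up at $O_1,\dots,O_{t-3}$, with exceptional divisors $E_1,\dots,E_{t-3}$; set $\mathcal{L}:=w^{\ast}\mathcal{O}_{\widetilde{X}}(d,-3E)(-2E_1-\cdots-2E_{t-3})$, and let $A$ be the preimage of $O_{t-2}$ in $M$. The hypothesis of Lemma~\ref{a1} applied to $(M,\mathcal{L},A)$ with $y=2$, when pushed down via $u_O\circ w$, becomes the requirement that the zero-dimensional scheme $3O\cup 3O_{t-2}\cup\bigcup_{i=1}^{t-3}2O_i$ impose independent conditions on $|\mathcal{O}_{\mathbb{P}^m}(d)|$. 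This scheme consists of $2$ triple points and $t-3\le\alpha-2$ double points, so it is contained in the kind of configuration covered by Lemma~\ref{h1} (case $i=2$), and the required $h^1$-vanishing follows. Lemma~\ref{a1} then shows that the strict transform of a general $H$ under the additional blow-up at $O_{t-2}$ is smooth in a neighborhood of the new exceptional divisor; this forces $O_{t-2}$ to be an isolated singular point of $H$, and moreover the smoothness of the strict transform is equivalent to the tangent cone of $H$ at $O_{t-2}$ being a smooth quadric in $\mathbb{P}^{m-1}$, i.e.\ to $O_{t-2}$ being an ordinary node. By symmetry the same holds at each $O_j$, so the contact locus of $H$ is $0$-dimensional and $\mathcal{H}$ is not $(t-3)$-weakly defective on $\widetilde{X}$.

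Combining non-weak-defectivity with the smoothness of $\widetilde{X}$, \cite{cc}, Theorem~1.4, yields $\mbox{Sing}(H)=\{O_1,\dots,O_{t-2}\}$, completing the lemma. The main obstacle is the bookkeeping needed to translate the two successive blow-ups — first $u_O$ defining $\widetilde{X}$, and then the auxiliary $w$ required by Lemma~\ref{a1} — into a single postulation statement on $\mathbb{P}^m$ matching a case of Lemma~\ref{h1}; once this translation is in place, the argument is essentially a transcription of part~(a) of the proof of Lemma~\ref{w1}.
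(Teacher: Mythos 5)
Your proposal is correct and follows essentially the same route as the paper: identify $\mathcal{H}(-2O_1-\cdots-2O_{t-2})$ with the system of degree-$d$ forms through $3O\cup\bigcup 2O_i$, invoke the case $i=2$ of Lemma~\ref{h1} to verify the hypothesis of Lemma~\ref{a1} with $y=2$ at each $O_j$, and conclude via \cite{cc}, Theorem~1.4. The only quibble is that smoothness of the strict transform near the exceptional divisor is not by itself \emph{equivalent} to the tangent cone being a smooth quadric (one needs the intersection with the exceptional divisor to be smooth, which here follows from the surjectivity of the restriction map in the proof of Lemma~\ref{a1}), but this does not affect the argument since you also fall back on \cite{cc}, Theorem~1.4, exactly as the paper does.
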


\begin{proof}
Fix general $O_1,\dots ,O_{t-2}\in \widetilde{X}$. Fix $j\in \{1,\dots ,t-2\}$ and set
$Z':= 3O_j\cup \bigcup _{i\ne j} 2O_i$, $Z'':= \cup _{i=1}^{t-2} 2O_i$ and $W:= 3O_j\cup \bigcup _{i\ne j} 2O_i$. We have $u_\ast (\mathcal {I}_{Z'}(d,-3E))
\cong \mathcal {I}_{W\cup 3O}(1)$. The case $i=2$ of Lemma \ref{h1} gives $h^1(\mathcal {I}_Z(d,-3E))=0$. Lemma \ref{a1} applied to a blowing-up of $\mathbb {P}^m$ at $\{O,O_1,\dots ,O_{t-2}\}\setminus \{O_j\}$
shows that a general
$H\in \mathcal {H}(-Z)$ has as an isolated singular point at $O_j$. Since this is true for all $j\in \{1,\dots ,t-2\}$, $\mathcal {H}$ is not $(t-3)$-weakly defective
(just by the definition of weak defectivity). The second assertion follows from the first one and \cite{cc}, Theorem 1.4.
\end{proof}

Now we can apply  Lemmas \ref{w1}, \ref{h1}, \ref{h2} and \ref{e2} and get the following result.

\begin{theorem}\label{w2}
Fix integers $m \ge 2$ and $d\ge 6$. If $m\le 4$, then assume $d\ge 7$. Set $\beta := \lfloor
\binom{m+d-2}{m}/(m+1)\rfloor$. Fix an integer $t$ such that $3 \le t \le \beta +1$. Then $\tau (X,t)$ is not drip defective.
\end{theorem}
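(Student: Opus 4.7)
The plan is to apply Lemma \ref{w1} directly, using Lemmas \ref{h1} and \ref{h2} to verify its two cohomological hypotheses. Lemma \ref{w1} reduces the theorem to the vanishings $h^1(\mathbb{P}^m,\mathcal{I}_{Z_1}(d))=0$ and $h^1(\mathbb{P}^m,\mathcal{I}_{Z_2}(d))=0$, where $Z_1\subset\mathbb{P}^m$ is a general union of one quadruple point and $t-2$ double points, and $Z_2\subset\mathbb{P}^m$ is a general union of two triple points and $t-2$ double points, together with the mild numerical condition $(m+1)t<n$.

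For $Z_1$, I would enlarge the configuration to a general union $W_1$ consisting of the quadruple point already in $Z_1$ plus $\beta-1$ generic double points (this is possible precisely because $t-2\le \beta-1$; the $\beta-t+1$ extra doubles are chosen to be independently generic, so the resulting $W_1$ is a generic configuration of the type considered in Lemma \ref{h2}). Lemma \ref{h2} then gives $h^1(\mathcal{I}_{W_1}(d))=0$, and the exact sequence
$$0\to \mathcal{I}_{W_1}(d)\to \mathcal{I}_{Z_1}(d)\to (\mathcal{I}_{Z_1}/\mathcal{I}_{W_1})(d)\to 0,$$
whose third term is supported in dimension zero, yields $h^1(\mathcal{I}_{Z_1}(d))=0$ from the long exact cohomology sequence. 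The argument for $Z_2$ is structurally identical: enlarge to a general union of two triple points and $\alpha-2$ double points and invoke Lemma \ref{h1} with $i=2$.

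The only non-formal step is verifying the numerical inequality $\alpha\ge \beta+1$ (which, combined with the hypothesis $t\le\beta+1$, yields $t-2\le \alpha-2$ and guarantees that the $Z_2$-enlargement above is available), together with the auxiliary bound $(m+1)t<n$. Using $\binom{m+d-1}{m}-\binom{m+d-2}{m}=\binom{m+d-2}{m-1}$ and the fact that $\binom{m+d-2}{m-1}\ge m+1$ under the standing hypotheses $m\ge 2$, $d\ge 6$ (with $d\ge 7$ if $m\le 4$), a short floor-function computation gives $\alpha\ge\beta+1$; the second inequality follows from $(m+1)(\beta+1)\le \binom{m+d-2}{m}+m+1 < \binom{m+d}{m}$. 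The main \emph{potential} obstacle — checking these inequalities across the boundary cases $m\in\{2,3,4\}$ with $d=7$ — is handled by direct computation. No conceptual difficulty is expected: the three preparatory lemmas do all the hard cohomological work, and what remains is elementary bookkeeping of the inequalities among $t$, $\alpha$, and $\beta$.
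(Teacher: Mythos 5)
Your proposal is correct and follows essentially the same route as the paper: Theorem \ref{w2} is obtained by feeding Lemma \ref{h1} (case $i=2$) and Lemma \ref{h2} into Lemma \ref{w1}, using the monotonicity of $h^1$ under enlarging the zero-dimensional scheme and the numerical facts $t-2\le \beta -1$ and $\beta +1\le \alpha$. If anything, your bookkeeping (deriving $\alpha \ge \beta +1$ from $\binom{m+d-2}{m-1}\ge m+1$ and checking $(m+1)t<n$) is spelled out more explicitly than in the paper's own proof.
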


\begin{proof}
Fix general $P_1,\dots ,P_{t-2},O\in \mathbb {P}^m$ and a general line $L\subset \mathbb {P}^m$ such that $O\in L$. Set $Z:=Z(O,L)\cup \bigcup _{i=1}^{t-2} 2P_i$,
$W:= 3O\cup \bigcup _{i=1}^{t-2} 2P_{t-2}$, $W':= 3O\cup 3O_1\cup \bigcup _{i=2}^{t-2} 2P_{t-2}$ and $W'':= 4O\cup \bigcup _{i=1}^{t-2} 2P_{t-2}$. Take $O_i\in \widetilde{X}$
such that $u_O(O_i) =P_i$, $1\le i \le t-2$. 
Since $u_{O_\ast }(\mathcal {I}_{2O_1\cup \cdots \cup 2O_{t-2}}(d,-4E)) \cong \mathcal {I}_W(d)$, Lemma \ref{h2} gives $h^1(\mathcal {I}_{2O_1 \cup \cdots \cup 2O_{t-2}}(d,-4E))
=0$. Since $Z(O,L) \subset 3O$, the case $y=3$ of Lemma \ref{a1} applied to the blowing-up of $\mathbb {P}^m$ at $O_1,\dots ,O_{t-2}$ shows that a general
$H\in
\vert
\mathcal {I}_W(d)\vert$ has an isolated singularity at
$O$ with multiplicity at most $3$.
\end{proof}

Recall that $\mbox{Sing}(\tau (X)) = X$ and that for each $Q\in \tau (X)\setminus X$ there is a unique $O\in X$ and a unique tangent vector $\nu$ to $X$ at $O$ such
that $Q\in \langle \nu \rangle$ and that $\langle \nu \rangle \setminus \{O\}$ is the contact locus of the tangent space $T_Q\tau (X)$ with $\tau (X)\setminus X$.

Let $P$ be a general point of $\tau (X,t)$, i.e. fix a general $(P_1,\dots ,P_{t-2},Q) \in X^{t-2}\times \tau (X)$ and a general
$P\in \langle \{P_1,\dots ,P_{t-2},Q\}\rangle$.

\vspace{0.3cm}

\qquad {\emph {Proof of Theorem \ref{i1}.}} Fix a general $P\in \tau (X,t)$, say $P\in \langle \{P_1,\dots ,P_{t-2},Q\}\rangle$ with $(P_1,\dots ,P_{t-2},Q)$ general in $X^{t-2}\times \tau (X)$. Terracini's lemma for joins (\cite{a}, Corollary 1.10)
gives $T_P\tau (X,t) = \langle T_{P_1}X\cup \cdots T_{P_{t-2}}X\cup T_Q\tau (X)\rangle$. Let $O$ be the point of $\mathbb {P}^m$ such that
$Q\in T_{j_{m,d}(O)}X$. Let $\mathcal {H}'$ (resp. $\mathcal {H}''$) be the set of all hyperplane $H\subset \mathbb {P}^N$
containing $T_Q\tau (X)$ (resp. $T_P\tau (X,t)$). We may see $\mathcal {H}'$ and $\mathcal {H}''$ as linear systems on the blowing-up $\widetilde{X}$ of $\mathbb {P}^m$ at $O$.  Take $O_i\in \widetilde{X}$, $1\le i \le t-2$,
such that $P_i=u(O_i)$ for all $i$. 
 We have $\mathcal {H}'' = \mathcal {H}'(-2P_1-\cdots -2P_{t-2})$ and $\mathcal {H} \subseteq \mathcal {H}'$, where $\mathcal {H}$ is defined in Notation \ref{e0}. Since $(P_1,\dots ,P_{t-2})$ is general in $X^{t-2}$ for a fixed $Q$ and $\mathcal {H} \subseteq \mathcal {H}'$, Lemma
\ref{e2} gives that a general $H\in \mathcal {H}''$ intersects $X$ in a divisor which, outside $O$, is singular
only at $P_1,\dots ,P_{t-2}$ and with an ordinary node at each $P_i$. Now assume $P\in \langle \{P'_1,\dots P'_{t-2},Q'\}\rangle$
for some other $(P'_1,\dots ,P'_{t-2},Q')\in X^{t-2}\times \tau (X)$. Since $P$ is general in $\tau (X,t)$ and $\tau (X,t)$ has the expected dimension, the $(t-1)$-ple $(P'_1,\dots ,P'_{t-2},Q')$ is general in $X^{t-2}\times \tau (X)$. Hence $H\cap X$ is singular at each $P'_i$, $1\le i \le t-2$, and with an ordinary node at each $P'_i$. Since
$O$ is not an ordinary node of $H\cap X$, we get $\{P_1,\dots ,P_{t-2}\} = \{P'_1,\dots ,P'_{t-2}\}$. Thus $O=O'$. Hence $H$ is tangent to $\tau (X)_{reg}$ exactly along the line
$\langle \{Q,O\}\rangle \setminus \{O\}$. Hence $Q'\in \langle \{Q,O\}\rangle$. Assume $Q\ne Q'$. Since $P$ is general in $\tau (X,t)$, then $P\notin \tau (X,t-1)$. Hence
$Q'\notin \langle \{P_1,\dots ,P_{t-2}\}\rangle$ and $Q\notin \langle \{P_1,\dots ,P_{t-2}\}\rangle$. Thus $\langle \{P_1,\dots
,P_{t-2},Q\}\rangle \cap \langle \{P_1,\dots ,P_{t-2},Q'\}\rangle = \langle \{P_1,\dots ,P_{t-2}\}\rangle$ if $Q\ne Q'$. Since
$P\in \langle \{P_1,\dots ,P_{t-2},Q\}\rangle \cap \langle
\{P_1,\dots ,P_{t-2},Q'\}\rangle$, we got a contradiction.\qed

\vspace{0.3cm}

\qquad {\emph {Proof of Theorem \ref{i3}.}} The case $t=2$ is well-known and follows from the following fact: for any $O\in X$ and any $Q\in T_OX\setminus \{O\}$ the group $G_O:=
\{g\in \mbox{Aut}(\mathbb {P}^n): g(O)=O\}$ acts on $T_OX$ and the stabilizer $G_{O,Q}$ of $Q$ for this action is the line $\langle \{O,Q\}\rangle$, while
$T_OX\setminus \langle \{O,Q\}\rangle$ is another orbit for $G_{O,Q}$. Thus we may assume $t\ge 3$. Fix a general $P\in \tau
(X,t)$ and a general hyperplane $H\supset T_P\tau (X,t)$. If $H$ is tangent to $\tau (X)$ at a point $Q'\in \tau
(X)\setminus X$, then it is tangent along a line containing $Q'$. Let $E\in X$ be the only point such that $Q'\in T_EX$. We
get $T_EX
\subset
\tau (X,t)$ and that
$H\cap T_EX$ is larger than the double point $2E \subset X$. Theorem \ref{i1} gives that $Q$, $Q'$ and $E$ are
collinear, i.e $H$ is tangent only along the line $\nu$.\qed

\providecommand{\bysame}{\leavevmode\hbox to3em{\hrulefill}\thinspace}


\begin{thebibliography}{99}

\bibitem{a}  B. \r{A}dlandsvik, Joins and higher secant varieties. Math.
Scand. {\bf {61}} (1987), no. 2, 213--222.

\bibitem{b} E. Ballico, On the weak non-defectivity of Veronese embeddings of projective spaces. Cent. Eur. J. Math. {\bf {3}} (2005), no. 2, 183--187.

\bibitem{bb} E. Ballico, A. Bernardi, Partial stratification of secant varieties of Veronese varieties via curvilinear subschemes, arXiv: 1010.3546v1~[math.AG].

\bibitem{bgi} A. Bernardi, A. Gimigliano, M. Id\`a. Computing symmetric rank for symmetric tensors.  J. Symb. Comput. {\bf 46} (2011) 34--53. 

\bibitem{cgg} M. V. Catalisano, A. V. Geramita, A. Gimigliano, On the secant varieties to the tangential varieties of a Veronesean. Proc. Amer. Math. Soc. {\bf {130}} (2002), no. 4, 975--985.

\bibitem{cc} L. Chiantini, C. Ciliberto, Weakly defective varieties. Trans. Amer. Math. Soc. {\bf {454}} (2002), no. 1, 151--178.

\bibitem{cc2} L. Chiantini, C. Ciliberto, On the dimension of secant varieties. J. Eur. Math. Soc. {\bf {12}} (2010), no. 5, 1267--1291.

\bibitem{cr} C. Ciliberto, F. Russo, Varieties with minimal secant degree and linear systems of maximal dimension on surfaces. Adv. Math. {\bf {206}} (2006), no. 1, 1--50.

\bibitem{e} D. Eisenbud, Commutative algebra. With a view toward algebraic geometry. Graduate Texts in Mathematics, 150. Springer-Verlag, New York, 1995.

\bibitem{m1} M. Mella, Singularities of linear systems and the Waring problem.  Trans. Amer. Math. Soc.  {\bf {358}}  (2006),  no. 12, 5523--5538.

\bibitem{m2} M. Mella, Base loci of linear systems and the Waring problem.  Proc. Amer. Math. Soc.  {\bf {137}}  (2009),  no. 1, 91--98.
\end{thebibliography}
\end{document}